\documentclass{notices}
\usepackage{amsfonts,amssymb,amsmath,amscd,amsthm}
\usepackage{graphicx} 
\usepackage{wrapfig}
\usepackage{enumitem}
\usepackage{subcaption}
\usepackage{url}
\usepackage{hyperref}
\usepackage{tikz-cd,tikz,tkz-graph}
\usetikzlibrary{decorations.pathreplacing,calc}
\usepackage[textwidth=2cm]{todonotes}
\usepackage{cleveref}
\newcommand{\DD}{\mathcal{D}}
\newcommand{\kk}{\Bbbk}
\newcommand{\NN}{\mathbb{N}}

\newcommand{\diam}{{\ensuremath{\mathrm{diam}}}}
\newcommand{\xg}[1]{x_{\{#1\}}}
\renewcommand\leq\leqslant
\renewcommand\geq\geqslant
\newtheorem{theorem}{Theorem}[section]
\newtheorem{lem}[theorem]{Lemma}

\newtheorem{conj}[theorem]{Conjecture}
\newtheorem{question}[theorem]{Question}
\theoremstyle{definition}

\newtheorem{exmp}[theorem]{Example}
\theoremstyle{remark}

\title{ Shuffling the Deck: Invariant Theory and the Graph Reconstruction Conjecture}
\author{
 Emilie Dufresne
 \affil{
 Lecturer in Algebra, Department of Mathematics, University of York, York YO10 5DD,UK. 
 }
 \and
 Gabriela Jeronimo
 \affil{
 Associate Professor, Department of Mathematics, University of Buenos Aires, Ciudad Universitaria, Pabellón I, 1428 Buenos Aires, Argentina.
 }
 \and
 Jenny Kenkel
 \affil{
 Assistant Professor, Grinnell College, 1115 8th Avenue, Grinnell, IA 50112, USA.
 }
 \and
 Haydee Lindo
 \affil{
 Associate Professor, Harvey Mudd College, 301 Platt Blvd., Claremont, CA 91711, USA. 
 }\and
 Nelly Villamizar
 \affil{
 Associate Professor, Department of Mathematics, Swansea University, Bay Campus, Fabian Way, Swansea SA1 8EN, UK.
 }
}

\begin{document}
 \maketitle

 \section*{The secret graph}
 Imagine there's a graph. We don't know what it is, but we have some clues. 
 If a particular vertex is removed from this graph, as well as all the edges connected to that vertex, then we're left with the graph in Figure \ref{fig:IntroCard1}. 
 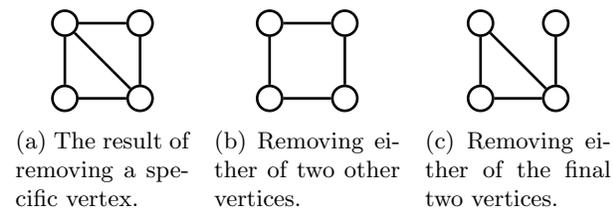
\begin{figure}[ht]
 \centering
 \begin{subfigure}[b]{0.14\textwidth}
 \centering
 \begin{tikzpicture}[line width=1pt]
 \tikzset{vertex/.style = {shape=circle,draw,minimum size=.3em}}
 \node[vertex] (a) at (0,1) {};
 \node[vertex] (b) at (0,0 ) {}; 
 \node[vertex] (c) at (1, 0) {}; 
 \node[vertex] (d) at (1,1) {}; 
 \draw (a)--(b);
 \draw (b)--(c);
 \draw (c)--(d);
 \draw (d)--(a);
 \draw (a)--(c); 
 \end{tikzpicture}
 \caption{The result of removing a specific vertex.}
 \label{fig:IntroCard1}
 \end{subfigure}
 \;
 \begin{subfigure}[b]{0.15\textwidth}
 \centering
 \begin{tikzpicture}[line width=1pt]
 \tikzset{vertex/.style = {shape=circle,draw,minimum size=.3em}}
 \node[vertex] (a) at (0,1) {};
 \node[vertex] (b) at (0,0 ) {}; 
 \node[vertex] (c) at (1, 0) {}; 
 \node[vertex] (d) at (1,1) {}; 
 \draw (a)--(b);
 \draw (b)--(c);
 \draw (c)--(d);
 \draw (d)--(a);
 \end{tikzpicture}
 \caption{Removing either of two other vertices.}
 \label{fig:IntroCard2}
 \end{subfigure}
 \;
 \begin{subfigure}[b]{0.15\textwidth}
 \centering
 \begin{tikzpicture}[line width=1pt]
 \tikzset{vertex/.style = {shape=circle,draw,minimum size=.3em}}
 \node[vertex] (a) at (0,1) {};
 \node[vertex] (b) at (0,0 ) {}; 
 \node[vertex] (c) at (1, 0) {}; 
 \node[vertex] (d) at (1,1) {}; 
 \draw (a)--(b);
 \draw (b)--(c);
 \draw (c)--(d);
 \draw (a)--(c);
 \end{tikzpicture}
 \caption{Removing either of the final two vertices.}
 \label{fig:IntroCard3}
 \end{subfigure}
 \caption{The deck of cards for the unknown graph.}
 \label{fig:deck}
 \end{figure}
 There are two different vertices in the graph whose removal would each yield the graph in Figure \ref{fig:IntroCard2}. 
 Finally, the last two vertices, when removed, each produce the graph in
 Figure \ref{fig:IntroCard3}. 
 Can you determine the secret graph? 
 
 You probably already figured out that the secret graph has five vertices. 
 With a bit more thought, you could determine it must have seven edges (more on this later). 
 At this point, you might take a brute-force approach, considering all ways to add a vertex and two edges to the graph in Figure \ref{fig:IntroCard1}. Eventually, you will find a graph that fits the description (see the end of this paper to have the problem spoiled). But can you be \emph{sure} what you've found is the only graph that works?

 The graph reconstruction conjecture asserts that you can be sure. 
 It posits that any two graphs with at least three vertices are isomorphic if and only if they have the same collection of isomorphism classes of vertex-deleted subgraphs.
 We call this multiset of unlabeled subgraphs the \emph{deck}, and each subgraph is a \emph{card}. 
 The conjecture claims that every graph is uniquely \emph{reconstructible} from its deck.
 
 For graphs on four vertices, it's easy to check by hand. 
 There are only $11$ non-isomorphic simple graphs, 
 and you can verify that all $11$ produce different decks before your coffee gets cold.
 With a thermos, you can probably check all $34$ connected graphs with five vertices. But to check all $156$ graphs with six vertices, you'll need several trips to the microwave.\footnote{Just imagine the taste of Brendan McKay's coffee after checking the conjecture for all $50$ trillion graphs with $13$ vertices \cite{McKay2022}.}
 This quickly gets out of hand, which is a major reason why the conjecture remains open today. 
 
 In this article, we will explore how \emph{invariant theory} provides an elegant framework for studying the graph reconstruction conjecture. The hope of this approach is to show that polynomials that distinguish between decks also distinguish between original graphs, thus translating a graph theoretic problem into an algebraic one.

 \section{A deck of cards}\label{sec:GRC}
 We now make these ideas precise in order to state the graph reconstruction conjecture.
 We have already defined the deck of a graph as the multiset of its (unlabeled) vertex-deleted subgraphs; each such subgraph is called a card. 

 Let $G$ be a finite simple (undirected) graph with vertex set $V(G)=\{v_1,\ldots, v_n\}$, indexed by $[n]=\{1,\ldots,n\}$, and edge set $E(G)\subseteq\{\{i,j\}\mid i,j\in [n],~i\neq j\}$. 
 
 A \emph{card} of $G$ is an unlabeled graph formed by removing one vertex $v_k$ of $G$, $k\in [n]$, and all the edges adjacent to it, and forgetting the labels of the remaining vertices. That is, a card is the isomorphism class of the vertex-deleted subgraph $G_k$ with $V(G_k)=V(G)\setminus \{v_k\}$ and $E(G_k)=E(G)\setminus \{\{i,j\}\mid i=k, \text{ or }j=k\}$.
 
 The \emph{deck} of $G$, written $\DD(G)$, is the multiset of all cards of $G$. There are $n$ cards in $ \DD(G)$, one for each vertex in $G$. Because the original labels of the vertices have been forgotten in each card, two cards formed from deleting different vertices may be in the same isomorphism class. This is why the deck is defined as a multiset, not a set, because we want to keep track of repeated cards. 
 
 \begin{exmp} Let $G$ be the graph with $n=4$ vertices in Figure \ref{fig:exampleGraph}.
 \begin{figure}[ht]
 \centering
 \begin{tikzpicture}[line width=1pt]
 \tikzset{vertex/.style = {shape=circle,draw,minimum size=.3em}}
 \node[vertex] (a) at (0,0) {$v_1$};
 \node[vertex] (b) at (1,0) {$v_2$};
 \node[vertex] (c) at (1,1) {$v_3$};
 \node[vertex] (d) at (0,1) {$v_4$};
 \draw[] (a)--(c) node[right]{};
 \draw[] (b)--(c) node[right]{};
 \draw[] (c)--(d) node[right]{};
 \draw[] (d)--(a) node[right]{};
 \end{tikzpicture}
 \caption{An example graph on four vertices.} 
 \label{fig:exampleGraph}
 \end{figure}
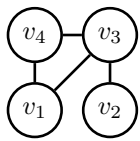
 The four subgraphs of $G$ produced by removing a vertex along with all its adjacent edges are in Figure 
 \ref{fig:vertexDeletedSubgraphs}.
 
 \begin{figure}[ht]
 \centering
 \begin{tikzpicture}[line width=1pt]
 \tikzset{vertex/.style = {shape=circle,draw,minimum size=.3em}}
 \node[vertex] (e) at (2,0) {$v_2$};
 \node[vertex] (f) at (2,1) {$v_3$};
 \node[vertex] (g) at (1,1) {$v_4$};
 \draw[] (e)--(f) node[right]{};
 \draw[] (g)--(f) node[right]{};
 \node[vertex] (h) at (4,0) {$v_1$};
 \node[vertex] (i) at (5,1) {$v_3$};
 \node[vertex] (j) at (4,1) {$v_4$};
 \draw[] (h)--(i) node[right]{}; 
 \draw[] (i)--(j) node[right]{};
 \draw[] (h)--(j) node[right]{};
 \begin{scope}[shift={(-4,-2)}] 
 \node[vertex] (n) at (5,0) {$v_1$};
 \node[vertex] (o) at (5,1) {$v_4$};
 \node[vertex] (p) at (6,0) {$v_2$};
 \draw[] (n)--(o) node[right]{};
 \node[vertex] (k) at (8,0) {$v_1$};
 \node[vertex] (l) at (9,0) {$v_2$};
 \node[vertex] (m) at (9,1) {$v_3$};
 \draw[] (k)--(m) node[right]{};
 \draw[] (l)--(m) node[right]{};
 \end{scope}
 \end{tikzpicture}
 \caption{The four vertex deleted subgraphs of the graph in Figure \ref{fig:exampleGraph}. }
 \label{fig:vertexDeletedSubgraphs}
 \end{figure}
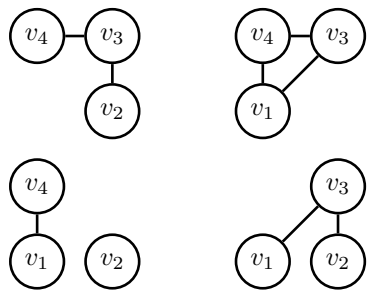

 \begin{figure}[ht]
 \centering
 \begin{tikzpicture}[line width=1pt]
 \tikzset{vertex/.style = {shape=circle,draw,minimum size=.3em}}
 \node[vertex] (e) at (2,0){};
 \node[vertex] (f) at (2,1) {};
 \node[vertex] (g) at (3,1) {};
 \node at (2.5,0){,};
 \draw[] (e)--(f) node[right]{};
 \draw[] (e)--(g) node[right]{};
 \draw[] (g)--(f) node[right]{};
 \node[vertex] (h) at (3.4,0) {};
 \node[vertex] (i) at (4.4,0) {};
 \node[vertex] (j) at (4.4,1) {};
 \node at (4.75,0){,};
 \draw[] (i)--(j) node[right]{};
 \draw[] (h)--(j) node[right]{};
 \node[vertex] (k) at (5.2,0) {};
 \node[vertex] (l) at (6.2,0) {};
 \node[vertex] (m) at (6.2,1) {};
 \node at (6.55,0){,};
 \draw[] (k)--(m) node[right]{};
 \draw[] (l)--(m) node[right]{};
 \node[vertex] (n) at (7,0) {};
 \node[vertex] (o) at (7.9,0) {};
 \node[vertex] (p) at (7,1) {};
 \draw[] (p)--(n) node[right]{};
 \draw [decorate,decoration={brace,amplitude=5pt,raise=4ex}]
 (2,-0.2) -- (2,1.2) node[midway,yshift=-3em]{};
 \draw [decorate,decoration={brace,amplitude=5pt,raise=4ex}]
 (2.2,-0.2) -- (2.2,1.2) node[midway,yshift=-3em]{};
 \draw [decorate,decoration={brace,amplitude=5pt,mirror, raise=4ex}]
 (7.6,-0.2) -- (7.6,1.2) node[midway,yshift=-3em]{};
 \draw [decorate,decoration={brace,amplitude=5pt, mirror, raise=4ex}]
 (7.8,-0.2) -- (7.8,1.2) node[midway,yshift=-3em]{}; 
 \end{tikzpicture}
 \vspace{-5mm}
 \caption{The deck of the graph in Figure \ref{fig:exampleGraph}.}\label{fig:example-deck}
 \end{figure}
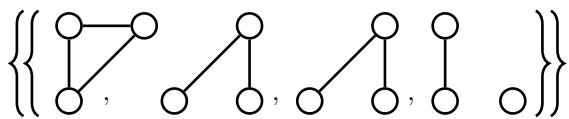
 Let $G_k$ denote the graph formed from $G$ by deleting vertex $v_k$. Observe from \ref{fig:vertexDeletedSubgraphs} that $G_1$ and $G_4$ isomorphic graphs so, the deck of $G$ is the multiset in Figure \ref{fig:example-deck}. \hfill$\diamond$
 \end{exmp}
 A graph $G$ is called \emph{reconstructible} if and only if $\DD(G)=\DD(G')$ implies that $G$ and $G'$ are isomorphic graphs. 
 To be reconstructible, a graph must have at least $2$ vertices; the two nonisomorphic graphs with two vertices, shown in Figure \ref{fig:twoVertices}, have the same deck, shown in Figure \ref{fig:deckTwoVertices}.
 
 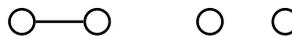
\begin{figure}[ht!]
 \begin{center}
 \begin{tikzpicture}[line width=1pt]
 \tikzset{vertex/.style = {shape=circle,draw, minimum size=.1em}}
 \node[vertex] (a) at (0,0) {};
 \node[vertex] (b) at (1,0) {};
 \node[vertex] (c) at (2.5,0) {};
 \node[vertex] (d) at (3.5,0) {};
 \draw[] (a)--(b) node[right]{};
 \end{tikzpicture}
 \caption{Nonisomorphic graphs with two vertices which have the same decks.}
 \label{fig:twoVertices}
 \end{center}
 \end{figure}  

 \begin{figure}[ht!]
 \centering
 \begin{tikzpicture}[line width=1pt]
 \tikzset{vertex/.style = {shape=circle,draw,minimum size=.3em}}
 \node[vertex] (e) at (0,0){};
 \node[vertex] (f) at (1,0) {};
  \node at (.5,0){,};

 
 \draw [decorate,decoration={brace,amplitude=5pt,raise=4ex}]
 (0,-0.4) -- (0,.5) node[midway,yshift=-3em]{};
 \draw [decorate,decoration={brace,amplitude=5pt,raise=4ex}]
 (-.2,-0.4) -- (-.2,.5) node[midway,yshift=-3em]{};
 \draw [decorate,decoration={brace,amplitude=5pt,mirror, raise=4ex}]
 (1.2,-0.4) -- (1.2,.5) node[midway,yshift=-3em]{};
 \draw [decorate,decoration={brace,amplitude=5pt, mirror, raise=4ex}]
 (1,-0.4) -- (1,.5) node[midway,yshift=-3em]{}; 
 \end{tikzpicture}
 \vspace{-5mm}
 \caption{The deck of both graphs with two vertices.}\label{fig:deckTwoVertices}
 \end{figure}
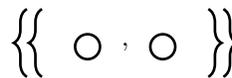
 The \emph{graph reconstruction conjecture} can be stated as follows.
 \begin{conj}[\cite{Kelly1957} and \cite{Ulam1960}]
 For $n\geq 3$, every simple undirected $n$-vertex graph is reconstructible.
 \end{conj}

 \section{What the deck reveals}
 Reading a graph's deck is like reading a hand of cards: some information is immediately visible, while other properties require more work. In this section we explore what can be determined from $\mathcal{D}(G)$
 alone.
 
 \subsection{Cards on the table}
 From the deck, the most apparent information about the original graph is the number of vertices: it equals the number of cards in $\DD(G)$, or equivalently, one more than the number of vertices in any single card. 
 The number of edges $|E(G)|$ can also be recovered. 
 Each edge of $G$ is absent from exactly two cards in the deck, namely those obtained by deleting each of its endpoints, and therefore appears in exactly $n-2$ cards. 
 It follows that 
 \[
 |E(G)|=\frac{\sum_{k=1}^n |E(G_k)|}{n-2}, 
 \]
 where the sum runs over all cards $G_k$ in the deck.

 This immediately settles the reconstruction conjecture for any class of graphs in which the number of edges determines the graph's isomorphism class: these include \emph{complete graphs} (every pair of vertices connected), \emph{almost complete graphs} (exactly one edge missing), empty graphs, and graphs with exactly one edge. A reference for reconstructability of these and other families of graphs is \cite{Manvel}.
 
 From the deck, we can also recover the degree of each vertex. Recall that the \emph{degree} $\deg(v_k)$ of a vertex $v_k\in V(G)$ is the number of edges that are incident to $v_k$.
 Since deleting $v_k$ removes precisely its incident edges, then we have 
 \[
 \deg(v_k) = |E(G)| - |E(G_k)|,
 \]
 where $G_k$ is the card obtained by deleting $v_k$. 
 
 We illustrate this with our earlier example (Figure \ref{fig:vertexDeletedSubgraphs}). 
 The deck has $4$ cards, so $G$ has $4$ vertices. 
 The cards have $3, 2, 2$ and $1$, edges respectively, giving 
 \[
 |E(G)|=\frac{3+2+2+1}{4-2} = 4.
 \]
 Subtracting each card's edge count $|E(G)|$ from the total number of edges then yields the degree of the corresponding deleted vertex, and so the degree multiset of $G$ is $ \{\!\{1,2,2,3\}\!\}$. 
 \begin{figure} 
 \centering
 \begin{tikzpicture}[line width=1.0pt]
 \tikzset{vertex/.style = {shape=circle,draw,minimum size=.3em}}
 \node[vertex] (a) at (0,1) {2};
 \node[vertex] (b) at (-.85,.3 ) {2}; 
 \node[vertex] (c) at (-.6, -.8) {1}; 
 \node[vertex] (d) at (.6,-.8) {1}; 
 \node[vertex] (e) at (.85,.3) {2}; 
 \draw[] (a)--(b) node[right]{};
 \draw[] (b)--(e) node[right]{};
 \draw[] (a)--(e) node[right]{};
 \draw[] (d)--(c) node[right]{};
 
 \begin{scope}[shift={(0.5,0)}] 
 
 \tikzset{vertex/.style = {shape=circle,draw,minimum size=.3em}}
 \node[vertex] (a2) at (2.5,1) {1};
 \node[vertex] (b2) at (1.65,.3 ) {2}; 
 \node[vertex] (c2) at (1.9, -.8) {2}; 
 \node[vertex] (d2) at (3.1,-.8) {2}; 
 \node[vertex] (e2) at (3.35,.3) {1}; 
 \draw[] (a2)--(b2) node[right]{};
 \draw[] (b2)--(c2) node[right]{};
 \draw[] (c2)--(d2) node[right]{};
 \draw[] (d2)--(e2) node[right]{};
 \end{scope}
 \end{tikzpicture}
 \caption{Each vertex is labeled with its degree. In this figure, two non-isomorphic graphs have the same degree multiset: $\{ \!\{ 1,1,2,2,2\} \!\}$.} \label{fig:sameDegSeq}
 \end{figure}
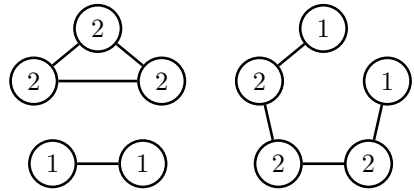 
 
 The degree multiset alone does not uniquely determine a graph. If it did, the graph reconstruction conjecture would have already been solved; see Figure \ref{fig:sameDegSeq} for two non-isomorphic graphs sharing the same degree multiset. Nonetheless, the degree multiset carries useful information. For instance, we can determine from the deck whether $G$ is \emph{regular} (that is, whether all vertices have the same degree), since this is visible directly from the degree multiset.
 
 A property of a graph is called \textit{recognizable} if it can be determined from the deck alone. As we have just seen, regularity is one such property. 

 \subsection{The two-step strategy}
 The general approach to proving the conjecture for a particular class $\mathcal{G}$ of graphs has two steps. 
 The first, \emph{recognition}, amounts to showing that the class is closed under deck equivalence: if given $G\in \mathcal{G}$ and $G'$ has the same deck as $G$, then $G'\in\mathcal{G}$ as well.
 The second, \emph{weak reconstruction}, amounts to showing that deck equivalence implies isomorphism within the class: if $G,G'\in \mathcal{G}$ have the same deck, then $G\cong G'$.

 We illustrate this strategy by proving the conjecture for Eulerian graphs. 
 Recall that a graph is \emph{Eulerian} (named after Euler's famous solution to the seven bridges problem) if it contains a cycle that visits every edge exactly once. 
 Such graphs admit a neat characterization: 
 $G$ is Eulerian if and only if it is connected and every vertex has even degree.

 Recognition for this class then follows immediately from two observations: connectivity is recognizable by the following lemma, and the parity of vertex degrees follows directly from the degree multiset.The following lemma is well-known to many that study graph reconstruction. 
 \begin{lem}\label{lemma}
A finite graph with at least 
3 vertices is connected if and only if at least two of its cards are connected.
 \end{lem}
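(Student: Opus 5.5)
We prove the two implications separately. The forward direction uses the classical fact that every connected graph has at least two non-cut vertices, which we get from a spanning tree. The backward direction is a short argument: any two vertices of $G$ are both seen together on one of the two connected cards.

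\emph{Forward direction.} Suppose $G$ is connected with $n\geq 3$ vertices. Choose a spanning tree $T$ of $G$. Since $T$ has $n\geq 2$ vertices, it has at least two leaves; a longest path in $T$ has two distinct endpoints, and these must be leaves. Let $v$ be a leaf of $T$. Then $T - v$ is still a tree on the remaining $n-1$ vertices, so it is a connected spanning subgraph of $G_v$. Hence the card $G_v$ is connected. Applying this to two distinct leaves gives two connected cards.

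\emph{Backward direction.} Suppose the cards $G_a$ and $G_b$ are connected, with $a\neq b$. Take any two vertices $x,y$ of $G$; we want a path between them.
\begin{itemize}
\item If neither $x$ nor $y$ is $a$, both lie in $G_a$. Since $G_a$ is a connected subgraph of $G$, it contains an $x$--$y$ path.
\item If one of them is $a$, say $x=a$, and $y\neq b$, then both $a$ and $y$ lie in $G_b$, which gives a path.
\item The remaining case is $\{x,y\}=\{a,b\}$. Because $n\geq 3$, there is a third vertex $c$. The card $G_a$ joins $b$ to $c$, and the card $G_b$ joins $a$ to $c$. Concatenating these walks gives an $a$--$b$ walk, which contains an $a$--$b$ path.
\end{itemize}
So every pair of vertices is joined by a path, and $G$ is connected.

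\emph{Where care is needed.} The only subtle point is the existence of two non-cut vertices in the forward direction, which the spanning-tree and longest-path argument handles. It is also worth noting why the hypothesis $n\geq 3$ is required: with two vertices and no edge, both cards are a single vertex, hence connected, yet the graph is disconnected. This is exactly the case excluded by the third vertex $c$ above.

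Finally, connectedness of a card is a property of its isomorphism class. So the criterion "at least two cards are connected" can be read off the deck alone, and connectivity is therefore recognizable.
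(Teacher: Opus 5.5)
Your proof is correct, but it takes a different route from the paper's in both directions.

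\emph{Backward implication.} The paper argues by contradiction. If $G$ is disconnected but the card $G_i$ is connected, then $v_i$ must be isolated in $G$, since otherwise it would attach to the connected rest. That isolated vertex survives in any other card $G_j$. Because $n\geq 3$, $G_j$ has at least two vertices, so it is disconnected. You instead build an $x$--$y$ path in every case, routing $a$ to $b$ through a third vertex $c$. The paper's version exposes the structural reason: a connected card of a disconnected graph can only come from deleting an isolated vertex. Yours is constructive and uses the two connected cards symmetrically.

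\emph{Forward implication.} The paper takes a longest path $P$ in $G$ itself and argues that its endpoints are non-cut vertices, because all neighbors of $v_1$ lie on $P$. To finish, it implicitly needs the fact that every component of $G-v_1$ contains a neighbor of $v_1$. You use a longest path only inside a spanning tree $T$, to produce two leaves. Connectivity of $G_v$ is then immediate, since $T-v$ is a connected spanning subgraph of it. This makes transparent the step the paper leaves terse. The small cost is invoking spanning trees and the fact that deleting a leaf from a tree leaves a tree.
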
 
 \begin{proof}
 For the sake of contradiction, suppose that $G$ is a disconnected graph with at least 3 vertices, but that at least two of its cards are connected.

 Since a card is obtained by deleting a single vertex, $G$ can have at most two components. Let $G_i$ be one connected card, formed by deleting the vertex $v_i$. Since $G_i$ is connected but $G$ is not, the vertex $v_i$ must be isolated in $G$. Now consider a second connected card $G_j$, formed by deleting a different vertex $v_j \neq v_i$. 
 Since $v_i$ is isolated in $G$, it remains an isolated vertex in $G_j$, and since $G$ has at least three vertices, $G_j$ is not connected, a contradiction.

For the other direction, suppose that $G$ is connected. 
Now consider a longest path $P = v_1-v_2 - \cdots - v_k$ in $G$. 
We claim that the cards corresponding to removing $v_1$ and $v_k$ are connected. 
Suppose, for a contradiction, that removing $v_1$ would disconnect $G$. 
However, because $P$ is a longest path, all neighbors of $v_1$ must lie on $P$ (otherwise, we could extend $P$, contradicting maximality). 
But since all neighbors are on $P$, and $v_2$ remains, the card obtained by removing $v_1$ remains connected. The same argument applies to $v_k$. Thus the cards corresponding to $v_1$ and $v_k$ are connected, and hence $G$ has at least two connected cards.
\end{proof}

 We are now ready to complete the recognition step. We can determine whether a graph $G$ with at least three vertices is Eulerian by checking if at least two of its cards are connected and using the degree sequence to verify that every vertex has even degree. Hence the property of being Eulerian is recognizable from the deck alone.
 
 For weak reconstruction, to recover the original graph, we take any card, add a new vertex, and join it to every vertex in the card that has odd degree. Note that the same idea reconstructs regular graphs: having determined regularity from the degree multiset, we pick any card, add a new vertex, and join it to every vertex that does not yet have the correct degree.
 
 The second step, weak reconstruction, is not always as straightforward as these two examples suggest. In 2007, Bilinski, Kwon, and Yu proved that planar graphs are recognizable~\cite{Bilinskietal2007}, yet the full conjecture remains open for this class.
 
\section{What do we know?}
The graph reconstruction conjecture remains open, but significant progress has been made.
Over the years, the conjecture has also been proved for several classes of graphs, including trees, unicyclic graphs, cacti, outerplanar graphs, and maximal planar graphs. In this section we survey what is known.

\subsection{Beyond the thermos of coffee}\label{sec:beyondThermos}
Beyond specific families, probabilistic and computational methods have also shed light on the conjecture.

 A \emph{random graph} $G_p$ on $n$ vertices is one in which the edges are chosen independently with probability $p$, typically taken to be $1/2$. 
 We say that \emph{almost all} graphs have property $Q$ if the probability that $G_p$ has property $Q$ tends to $1$ as $n$ tends to infinity. 
 
 M\"uller~\cite{Muller1976} showed that almost all graphs are reconstructible, and Bollob\'as~\cite{bollobas--3reconstructibility} strengthened this by proving that almost every graph can be reconstructed from just three of its cards, a fact originally conjectured by Harary and Plantholt~ \cite{HararyPlanthold1985}. On the computational side, McKay~\cite{McKay2022} has verified the graph reconstruction conjecture for all graphs with up to 13 vertices. 
 
 Two useful reductions are also known. Yang~\cite{Yang_2-connectedGraphs} proved that the conjecture holds in general if and only if it holds for every 2-connected graph (that is, a graph $G$ such that every card of $G$ is connected). 
 Gupta et al.~\cite{Gupta} showed that it suffices to prove the conjecture for 
 graphs $G$ satisfying $\diam(G)=2$ or $\diam(G) = \diam(\bar{G})=2$, where $\diam(G)$ denotes the
 \emph{diameter} of $G$ (the maximum distance between any two vertices in the graph) and $\bar G$ is the complement of $G$.
 
 \subsection{Other card games}
 Several authors have focused on reconstructing partial information from the deck rather than the full graph.
 Others have studied variants of the graph reconstruction conjecture itself.
 Rather than deleting a single vertex at a time, one can form \emph{small cards} by deleting sets of $k \geq 1$
 vertices; a survey of reconstruction results in this setting can be found in~\cite{KostochkaWest2021}. 

Another variant considers decks with \emph{missing cards}: given a class of graphs that is reconstructible, or for which certain information is reconstructible, what is the largest number of cards that can be missing while reconstruction remains possible? See e.g.~~\cite{Groenlandetal2022} for recent results.
 
A closely related problem is \emph{edge-reconstruction}, where cards are formed by removing edges rather than vertices.
The edge-reconstruction conjecture claims that every graph with at least four edges is edge-reconstructible.
Greenwell~\cite{Greenwell} proved that reconstructibility implies edge-reconstructibility for graphs with no isolated vertices and at least four edges. 

Finally, recall that the total number of edges in a graph $G$ can be determined from its deck: \[
 |E(G)|=\frac{\sum_{k=1}^n |E(G_k)|}{n-2}. 
 \]
But for an arbitrary multiset of graphs with $n-1$ vertices, the total number of edges will not necessarily be a multiple of $n-2$. So not every multiset corresponds to a valid deck. The \emph{legitimate deck problem} posed by Harary~\cite{Harary63}, asks 
which multisets of $n$ graphs on $n-1$ vertices
can actually arise as the deck of some graph.  

 \section{Beyond simple graphs}\label{sec:WeightedG}
 A natural generalization is to consider \emph{directed graphs}, where every edge is given a direction. 
 The story here is short: Stockmeyer~\cite{Stockmeyer} proved that the directed graph reconstruction conjecture is false.
 \begin{exmp} 
 The two directed graphs in Figure~\ref{fig:directedGraphCounterExample} are not isomorphic:
 one contains a source vertex and the other a sink vertex, yet their directed decks are identical, as shown in Figure \ref{fig:deckDirectedGraph}. \hfill$\diamond$
 \begin{figure}[ht]
 \centering
 \begin{tikzpicture}[line width=1pt]
 \tikzset{vertex/.style = {shape=circle,draw,minimum size=.3em}};
 \tikzset{edge/.style = {->,>={Latex[length=1.8mm]}}};
 \node[vertex] (SW) at (-2, 0){};
 \node[vertex] (SE) at (-4, 0){};
 \node[vertex] (N) at (-3, 1.8){};
 \node[vertex] (C) at (-3, .7){};
 
 \draw[edge] (SE)--(SW);
 \draw[edge] (SW)--(N);
 \draw[edge] (N)--(SE);
 \draw[edge] (C)--(SE);
 \draw[edge] (C)--(SW);
 \draw[edge] (C)--(N);
 \node[vertex] (SW2) at (0, 0){};
 \node[vertex] (SE2) at (2, 0){};
 \node[vertex] (N2) at (1, 1.8){};
 \node[vertex] (C2) at (1, .7){};
 
 \draw[edge] (N2)--(SW2);
 \draw[edge] (SW2)--(SE2);
 \draw[edge] (SE2)--(N2);
 \draw[edge] (SE2)--(C2);
 \draw[edge] (SW2)--(C2);
 \draw[edge] (N2)--(C2);
 \end{tikzpicture} 
 \caption{Two non-isomorphic directed graphs with isomorphic decks.}
 \label{fig:directedGraphCounterExample}
 \end{figure}
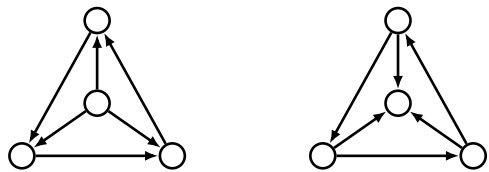

 \begin{figure}[ht]
 \centering
 \begin{tikzpicture}[line width=1pt]
 \tikzset{vertex/.style = {shape=circle,draw,minimum size=.3em}}
 \tikzset{edge/.style = {->,>={Latex[length=1.5mm]}}};
 \node[vertex] (e) at (2,0){};
 \node[vertex] (f) at (3,0) {};
 \node[vertex] (g) at (2.5,.9) {};
 \node at (3.25,0){,};
 
 \draw[edge] (e)--(f);
 \draw[edge] (f)--(g) node[right]{};
 \draw[edge] (g)--(e) node[right]{};

 \node[vertex] (h) at (3.7,0) {};
 \node[vertex] (i) at (4.7,0) {};
 \node[vertex] (j) at (4.2,.9) {};
 
 \draw[edge] (i)--(j) node[right]{};
 \draw[edge] (i)--(h) node[right]{};
 \draw[edge] (j)--(h) node[right]{};
 \node at (4.95,0){,};
 \node[vertex] (k) at (5.4,0) {};
 \node[vertex] (l) at (5.9,.9) {};
 \node[vertex] (m) at (6.4,0) {};
 \draw[edge] (k)--(l);
 \draw[edge] (k)--(m);
 \draw[edge] (m)--(l);
 \node at (6.65,0){,};
 \node[vertex] (n) at (7.1,0) {};
 \node[vertex] (o) at (8.1,0) {};
 \node[vertex] (p) at (7.6,.9) {};
 \draw[edge] (n)--(o) node[right]{};
 \draw[edge] (p)--(n) node[right]{};
 \draw[edge] (p)--(o) node[right]{};
 \draw [decorate,decoration={brace,amplitude=5pt,raise=4ex}]
 (2.1,-0.2) -- (2.1,1.2) node[midway,yshift=-3em]{};
 \draw [decorate,decoration={brace,amplitude=5pt,raise=4ex}]
 (2.3,-0.2) -- (2.3,1.2) node[midway,yshift=-3em]{};
 \draw [decorate,decoration={brace,amplitude=5pt,mirror, raise=4ex}]
 (7.8,-0.2) -- (7.8,1.2) node[midway,yshift=-3em]{};
 \draw [decorate,decoration={brace,amplitude=5pt, mirror, raise=4ex}]
 (8.0,-0.2) -- (8.0,1.2) node[midway,yshift=-3em]{}; 
 \end{tikzpicture}
 \caption{The directed deck of the graphs in Figure~\ref{fig:directedGraphCounterExample}.} \label{fig:deckDirectedGraph}
 \end{figure}
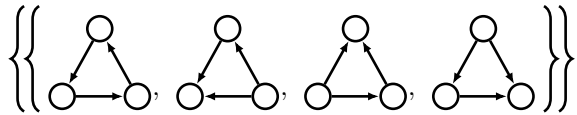 
 \end{exmp} 
\subsection{The weighted reconstruction conjecture}
\emph{Weighted graphs} are another reasonable generalization, in which each edge is assigned a weight, usually a non-zero number.
Weighted graphs arise naturally in combinatorial optimization, with the Travelling Salesman Problem being a prominent example. 
 
Figure \ref{fig:weightedGraph} shows a weighted graph with real weights: the edges $\{1,2\}$ and $\{2,4\}$ have weight 1, the edges $\{1,3\}$ and $\{3,4\}$ have weight -1, the edge $\{2,3\}$ has weight $3$, and the edge $\{1,4\}$ has weight 2. 

 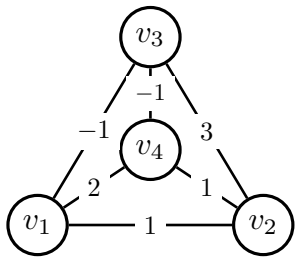
\begin{figure}[ht]
 \centering
 {\begin{tikzpicture} 
 \tikzset{vertex/.style = {shape=circle,draw,minimum size=.3em,very thick}}
 \tikzset{edge/.style = 
 {->,>={Latex[length=1.5mm]}}};
 \SetVertexNormal[LineWidth=1pt]
 \node[vertex] (A) at (0, 0){\scalebox{1.2}{$v_1$}};
 \node[vertex] (B) at (3, 0){\scalebox{1.2}{$v_2$}};
 \node[vertex] (C) at (1.5, 2.5){\scalebox{1.2}{$v_3$}};
 \node[vertex] (D) at (1.5, 1){\scalebox{1.2}{$v_4$}};
 \tikzstyle{LabelStyle}=[] 
 \tikzset{EdgeStyle/.style = {line width=1pt}}
 \tikzset{VertexStyle/.style = {line width=1pt}}
 \Edge[label=$1$](A)(B) 
 \Edge[label=$-1$](A)(C)
 \Edge[label=$2$](A)(D)
 \Edge[label=$3$](B)(C)
 \Edge[label=\scalebox{1}{$1$}](B)(D)
 \Edge[label=\scalebox{0.9}{$-1$}](C)(D)
 \end{tikzpicture}}
 \caption{An $\mathbb{R}$-weighted graph on four vertices} 
 \label{fig:weightedGraph}
 \end{figure}
 Following \cite{Pouzet77} and \cite{Thiery}, we then define a \textit{weighted card} to be the unlabeled weighted graph formed by removing one vertex and all adjacent edges, while preserving the weights on all remaining edges. 
 The \textit{weighted deck} is then the multiset of all weighted cards. 
 One can ask whether a weighted graph can be reconstructed from its weighted deck; this is the \textit{weighted} graph reconstruction conjecture. 
 
 The weighted graph reconstruction conjecture is stronger than the original: any simple graph can be viewed as a weighted graph with all the edges assigned the same weight, and any counterexample to the original conjecture would yield a counterexample to the weighted conjecture. 
 
 Conversely, any counterexample to the weighted reconstruction conjecture would involve only finitely many distinct edge weights. By relabeling these weights injectively, one could therefore obtain an equivalent counterexample whose weights lie in any sufficiently large subset of the field. Thus restricting attention to graphs whose weights belong to a large set (for example, the nonzero elements of the field) does not eliminate potential counterexamples.
 
 At first glance, passing to weighted graphs might seem to make an already hard problem harder. 
 The key insight, however, is that considering weighted graphs unlocks algebraic techniques that are inaccessible for simple graphs.
 \subsection{A vector space of graphs}
If $\kk$ is a field, the set $V_n$ of $\kk$-weighted graphs on $n$ vertices is a $\kk$-vector space with basis $\bigl\{e_{\{i,j\}}\mid i,j\in[n], i<j\bigr\}$, where each basis vector $e_{\{i,j\}}$ corresponds to a potential edge $\{i,j\}$. 
Any $\kk$-weighted graph is then represented as 
\[
\sum_{\{i,j\} \subseteq [n]} w_{\{i,j\}} e_{\{i,j\}},
\]
where $w_{\{i,j\}}\in\kk$ is the weight associated to the edge $\{i,j\}$, with $w_{\{i,j\}}=0$ indicating the absence of the edge. 
 For example, the vector corresponding to the graph in Figure \ref{fig:weightedGraph} is 
 \[ e_{\{1,2\}}-e_{\{1,3\}}+2e_{\{1,4\}}+e_{\{2,4\}}+3e_{\{2,3\}}-e_{\{3,4\}}.
 \]
 The symmetric group $S_n$ acts on $V_n$ by permuting vertex labels: $\sigma\cdot e_{\{i,j\}}=e_{\{\sigma(i), \sigma(j)\}}$, extended linearly. 
 Two $\kk$-weighted graphs $G$ and $H$, corresponding to vectors 
 $\sum_{\{i,j\}\subseteq [n]} w_{\{i,j\}} e_{\{i,j\}}$ and $\sum_{\{i,j\}\subseteq [n]} u_{\{i,j\}} e_{\{i,j\}}$, respectively, for $w_{\{i,j\}}, u_{\{i,j\}} \in \kk$, are isomorphic if and only if there exists $\sigma\in S_n$ such that 
 \[
 \sigma \cdot \biggl( \sum_{\{i,j\}\subseteq [n]} w_{\{i,j\}} e_{\{i,j\}}\biggr) = \sum_{\{i,j\}\subseteq [n]} u_{\{i,j\}} e_{\{i,j\}}.
 \] 
 \section{Telling orbits apart}\label{sec:InvThy}
 When a group acts on a vector space, \emph{invariants} are polynomial functions unchanged by the group action. They provide a way to study orbits, the distinct paths traced out by the action.
 For finite groups, something remarkable happens: we can always tell orbits apart using invariants and often do not need the entire ring of invariants to do it. 
A carefully chosen finite collection of invariants, called \emph{separating set}, suffices to distinguish every pair of orbits.

 In other words, if all invariants in the separating set agree on two vectors $u$ and $v$, then $u$ and $v$ must lie in the same orbit.
 This section explains this idea, building on work by Derksen-Kemper \cite{DerksenKemper} and Dufresne \cite{Dufresne2009}. 
 In Section~\ref{sec:4}, we connect it to graph reconstruction.
 
 To set the stage, suppose:
 \begin{itemize}[itemsep=0cm,topsep=0.1cm]
 \item $H$ is a finite group (such as the symmetric group $S_n$),
 \item $V$ is a finite-dimensional vector space over a field $\Bbbk$,
 \item For all $h\in H$, the function defined by $v \mapsto h \cdot v$ is an invertible linear transformation (in other words, the action of $H$ on $V$ defines a representation). 
 \end{itemize}
 
 Rather than working directly with vectors in $V$, we consider the ring of polynomial functions on $V$, denoted $\Bbbk[V]$. 
 This ring consists of all polynomial functions from $V$ to $\Bbbk$. 

 The group action of $H$ on $V$ induces an action on $\Bbbk[V]$, defined by
 \[
 (\sigma \cdot f)(v) = f(\sigma^{-1} \cdot v),
 \]
 for all $\sigma \in H, f \in \Bbbk[V]$, and $v \in V$. 
 The use of $\sigma^{-1}$ is needed to define a left action on $\Bbbk[V]$: 
it ensures that $(\sigma\tau) \cdot f = \sigma \cdot (\tau \cdot f)$ for 
all $\sigma, \tau \in H$, furthermore
 \begin{align*}
 (\sigma \cdot f )(\sigma \cdot v) = f(\sigma^{-1} \sigma \cdot v ) = f(v), 
 \end{align*}
 for all $v \in V$ and all $\sigma \in H$. 
 
 The \emph{ring of invariants}, denoted $\Bbbk[V]^H$, consists of all polynomials $f \in \Bbbk[V]$ that do not change when we apply the group action, that is, all polynomials such that $\sigma \cdot f = f$ for all $\sigma \in H$. 
 These invariants are constant on orbits in $V$:
 if $u$ and $v$ are elements of $V$ such that
 $u = \sigma \cdot v$ for some $\sigma \in H$, then 
 \[
 f(u) = f(\sigma\cdot v) = (\sigma^{-1}\cdot f)(v) = f(v),
 \]
 for all $f \in \Bbbk[V]^H$.
 
 The full ring of invariants distinguishes between orbits but can be unwieldy. A more practical notion is that of a separating set: rather than capturing all invariant information, we ask for a smaller collection of invariants that still suffices to tell orbits apart.
 
 \begin{question} Can we find a small subset of $\kk[V]^H$ that still separates orbits?
 \end{question}
 
 To make this precise, suppose there exists a polynomial $f \in \Bbbk[V]^H$ such that $f(u) \ne f(v)$ for some $u,v \in V$. 
 Then $u$ and $v$ must lie in different orbits, and we say that $f$ \emph{separates} $u$ and $v$. 
 
 A subset $E \subseteq \Bbbk[V]^H$ is called a \emph{separating set} if
 for all $u, v \in V$, if there exists $f \in \Bbbk[V]^H \text{ with } f(u) \ne f(v)$, then there exists $g \in E$ with $g(u) \ne g(v)$. In other words, $E$ separates every pair of vectors that an invariant can separate.

 This notion, introduced by Derksen and Kemper in the first edition of the book \cite{DerksenKemper}, is useful because separating sets are often much smaller and easier to work with than generating sets for the full ring $\Bbbk[V]^H$.
 
 For finite groups, a beautiful fact is that orbits can always be separated~\cite{DerksenKemper}*{Section 2.3.1}, 
 and in many cases separating sets can be described explicitly.
 
 \begin{exmp}
 [A cyclic group action]\label{eg:SepSet}
 Consider the field $\Bbbk = \mathbb{C}$ and the action of the cyclic group $C_4 = \langle \sigma \rangle$ of order 4 on $V = \mathbb{C}^2$ defined by
 $\sigma \cdot (v_1, v_2) = (i v_1, i v_2)$, where $i^2 = -1$.
 
 Let $x_1, x_2$ be the coordinate functions on $V$, that is, the linear transformations such that $x_{j}(e_k)=1$ if $j=k$ and $0$ otherwise. 
 The induced action on $\mathbb{C}[x_1, x_2]$ is:
 \[
 \sigma \cdot x_1 = -i x_1, \quad \sigma \cdot x_2 = -i x_2.
 \]
 One can verify this by checking that $(\sigma \cdot x_1)(v) = x_1(\sigma^{-1} \cdot v)$.
 Now consider the set:
 \[
 E = \bigl\{x_1^4, x_1^3 x_2, x_2^4\bigr\}.
 \]
 Each polynomial in $E$ is invariant under the action of $C_4$, and together they form a separating set. To see why, suppose $u = (u_1, u_2)$ and $v = (v_1, v_2)$ satisfy $f(u) = f(v)$ for all $f \in E$. 
 A straightforward computation shows that $u$ must be a scalar multiple of $v$ by a fourth root of unity; hence $u$ and $v$ lie in the same orbit under the action of $C_4$.
 
 This illustrates that separating sets can be strictly smaller than generating sets; the set $E$ does \emph{not} generate the full invariant ring. For example, the polynomials $x_1^2 x_2^2$ and $x_1 x_2^3$ are also invariants, but they are not in the subalgebra generated by $E$. 
In general, multiple distinct separating sets may exist. For example, the set \(\{x_1^4, x_1x_2^3,x_2^4\}\) is also a separating set. \hfill$\diamond$
 \end{exmp}

{Why does this matter?}
This example hints at why separating sets are the right tool for studying isomorphism classes of graphs. 
When graphs are represented as points in a vector space, 
the symmetric group $S_n$ acts by permuting vertex labels. Invariants under this action correspond precisely to graph properties that do not depend on how vertices are labeled. 

A separating set then allows us to:
\begin{itemize}[itemsep=0cm,topsep=0.1cm]
 \item test whether two graphs are isomorphic, that is, whether they lie in the same orbit;
 \item investigate whether a graph can be reconstructed from partial information, such as its deck.
\end{itemize}
Although these problems are computationally hard in general, separating invariants provide a powerful theoretical tool: even without a full description of the invariant ring, we know a small, manageable set of functions suffices to determine orbit membership.

\section{Orbit sums}\label{sec:4}
We may now describe how invariant theory provides an approach to the Graph Reconstruction Conjecture. 
As we saw in Section \ref{sec:WeightedG}, weighted graphs on $n$ vertices can be viewed as vectors in the space $V_n$, and the symmetric group acts on this space by permuting vertex labels. Two graphs are isomorphic precisely when they lie in the same orbit of this action.

Invariant theory studies polynomial functions on $V_n$
that are constant on these orbits. Such functions therefore describe properties of graphs that do not depend on the labeling of the vertices.

If, as before, $e_{\{i,j\}}$ denote the standard basis vectors of $V_n$, indexed by pairs $1\leq i<j\leq n$, we write $x_{\{i,j\}}$ for the dual coordinate functions. 
The polynomial ring 
\[
\kk[V_n]=\kk\bigl[x_{i,j}\mid 1\leq i<j\leq n\bigr],
\] 
consists of polynomials in one variable $x_{i,j}$ for each possible edge $\{i,j\}$.  

A monomial 
\[
x^{M}=\prod x_{\{i,j\}}^{m_{\{i,j\}}}
\] 
can be interpreted as encoding a weighted graph $M$. 
The power $m_{i,j}$ records the weight of the edge $\{i,j\}$; when $m_{i,j}=0$, the edge $\{i,j\}$ is absent from the graph.
 In this way, monomials correspond naturally to $\NN$-weighted graphs on $n$ vertices.

The group $S_n$ acts on the polynomial ring $\kk[V_n]$ by permuting indices: 
\[
\sigma\cdot x_{\{i,j\}}=x_{\{\sigma(i),\sigma(j)\}},
\] 
giving a representation of $S_n$ on $\kk[V_n]$. 
Thus the orbit of a monomial corresponds exactly to the set of all weighted graphs obtained from $M$ by relabeling the vertices.

A particularly natural family of invariants arises by summing a monomial over its orbit under the action of $S_n$. 
For a monomial $x^M$, the \emph{orbit sum} $O_{S_n}\bigl(x^{M}\bigr)$ is defined as the sum of all distinct monomials in the orbit of $x^M$ on $S_n$.

By construction, each orbit sum is invariant under the action of $S_n$. 
Orbit sums therefore provide a way to construct invariant polynomials.

\begin{exmp}\label{ex:4.1}
 Consider the case $n=3$. The variables in $k[V]$ are 
 \[
 x_{\{1,2\}}, \ \ x_{\{1,3\}},\ \ \text{and } x_{\{2,3\}}.
 \]
Let 
\[
x^M=x_{\{1,2\}}x_{\{2,3\}},
\]
be the monomial that corresponds to the path $M$ from $v_1$ to $v_3$ through $v_2$; see figure \ref{fig:pathGraph}.  
The orbit of this monomial under the action of $S_3$ consists of the three monomials 
\[x_{\{1,2\}}x_{\{2,3\}}, \\ x_{\{1,2\}}x_{\{1,3\}},\text{ and } x_{\{1,3\}}x_{\{2,3\}}.\]
The orbit sum is therefore
\[
O_{S_3}(x^M)=x_{\{1,2\}}x_{\{2,3\}} + x_{\{1,2\}}x_{\{1,3\}} + x_{\{1,3\}}x_{\{2,3\}}.
\]
Each term corresponds to a labeling of the same underlying graph. \hfill$\diamond$
\end{exmp}

\begin{figure}[ht]
 \centering
 \begin{tikzpicture}
 \tikzset{vertex/.style = {shape=circle,draw,minimum size=.3em,very thick}};
 \SetVertexNormal[LineWidth=1pt]
 \node[vertex] (A) at (0,0){$v_1$};
 \node[vertex] (B) at (1.5,0){$v_2$};
 \node[vertex] (C) at (3,0){$v_3$};
 
 \draw (A)--(B);
 \draw (B)--(C);
 
 \end{tikzpicture}
 \caption{The graph $M$ corresponding to the monomial $x^M=x_{\{1,2\}}x_{\{2,3\}}$ in Example \ref{ex:4.1}.}
 \label{fig:pathGraph}
\end{figure}
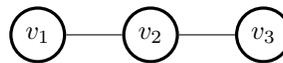
Orbit sums therefore provide natural invariants of graphs, since they depend only on the isomorphism class of the underlying graph.
The polynomials $O_{S_n}(x^{M })$ are invariant by construction, and in fact they form a basis for $\kk[V_n]^{S_n}$ as a $\kk$-vector space. 
Indeed, every invariant polynomial $f\in \kk[V_n]^{S_n}$ can be written as a finite $\kk$-linear combination of monomials. 
Since $f$ is invariant, all monomials in the same $S_n$-orbit must appear with the same coefficient. 
Consequently, $f$ can be expressed as a $\kk$-linear combination of orbit sums $O_{S_n}(x^M)$.

In particular, orbit sums generate the invariant ring $\kk[V_n]^{S_n}$ as a $\kk$-algebra. 
If $\kk$ has characteristic zero (or characteristic larger than $n$), then Noether's bound implies that the ring of invariants is generated in degree at most $|S_n|=n!$.
Thus, although the invariant ring is finitely generated, the resulting generating set is extremely large. 

Despite these limitations, orbit sums offer a conceptually simple family of invariants with a clear combinatorial interpretation. As we now explain, evaluating these invariants on a graph leads to counting subgraphs.

\begin{exmp}\label{ex:4.2}
For $n=3$, the generating set for $\kk[V_n]^{S_n}$ given by orbit sums of all multigraphs with at most $3!=6$ edges consists of $22$ polynomials in $\kk\bigl[\xg{1,2},\xg{1,3},\xg{2,3}\bigr]$. In this case, however, the invariant ring can be described explicitly.
 
Indeed, if we set
\[
X_1=\xg{2,3}, \qquad X_2=\xg{1,3}, \qquad X_3=\xg{1,2},
\]
then the action of $S_3$ is given by
\[
\sigma\cdot X_i = X_{\sigma(i)}
\]
for every $\sigma\in S_3$. Thus $\kk[V_n]^{S_n}$ is the ring of symmetric polynomials in $X_1,X_2,X_3$, and so it is generated by the elementary symmetric polynomials:
\begin{itemize}[itemsep=0cm,topsep=0.1cm]
\item $X_1+X_2+X_3 = O_{S_3}\bigl(\xg{2,3}\bigr)$,
 \item $X_1X_2+X_1X_3+X_2X_3 = O_{S_3}\bigl(\xg{2,3}\xg{1,3}\bigr)$,
 \item $X_1X_2X_3 = O_{S_3}\bigl(\xg{2,3}\xg{1,3}\xg{1,2}\bigr)$.
 \end{itemize}
 The first generator is the orbit sum of a monomial corresponding to a graph with one edge, the second to a graph with two edges, and the third to a graph with three edges. These graphs are illustrated in \Cref{fig:3Vertices}.
 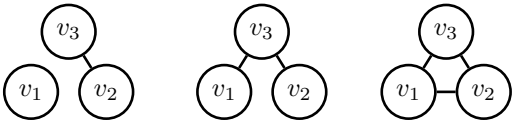
\begin{figure}[ht]
 \centering
 \begin{tikzpicture}[line width=1pt]
 \tikzset{vertex/.style = {shape=circle,draw,minimum size=.3em}}
 \node[vertex] (a) at (0,0) {$v_1$};
 \node[vertex] (b) at (1,0) {$v_2$};
 \node[vertex] (c) at (.5,.8) {$v_3$};
 \draw[] (b)--(c) node[right]{};
 \end{tikzpicture} \qquad
 \begin{tikzpicture}[line width=1pt]
 \tikzset{vertex/.style = {shape=circle,draw,minimum size=.3em}}
 \node[vertex] (a) at (0,0) {$v_1$};
 \node[vertex] (b) at (1,0) {$v_2$};
 \node[vertex] (c) at (.5,.8) {$v_3$};
 \draw[] (b)--(c) node[right]{};
 \draw[] (a)--(c) node[right]{};
 \end{tikzpicture}\qquad
 \begin{tikzpicture}[line width=1pt]
 \tikzset{vertex/.style = {shape=circle,draw,minimum size=.3em}}
 \node[vertex] (a) at (0,0) {$v_1$};
 \node[vertex] (b) at (1,0) {$v_2$};
 \node[vertex] (c) at (.5,.8) {$v_3$};
 \draw[] (b)--(c) node[right]{};
 \draw[] (a)--(c) node[right]{};
 \draw[] (a)--(b) node[right]{};
 \end{tikzpicture}
 \caption{Graphs on three vertices corresponding to the orbit sums of monomials with one, two, and three edges.}\label{fig:3Vertices}
 \end{figure}
 
 Thus, although the general construction gives $22$ orbit sums, in this case the invariant ring is generated by only $3$ of them.\hfill$\diamond$
\end{exmp}

Example \ref{ex:4.2} shows that, for $n=3$, the invariant ring can be generated by orbit sums of simple graphs. 
Pouzet asked whether this remains true in general: do orbit sums of simple graphs generate the invariant ring under an $S_n$-action \cite{Pouzet77}? 
Thi\'ery answered this question in the negative: for $n\ge 5$, there exists a homogeneous invariant of degree $4$ that does not lie in the $\kk$-algebra generated by the orbit sums of simple graphs \cite[Thm.~11.2.5]{ThieryThesis}. 
The proof is non-constructive and uses the graded structure of the invariant ring.

\section{Orbit sums count subgraphs}\label{sec:countSubgraphs}

A crucial observation is that, when $\kk$ has characteristic zero, orbit sums of simple graphs count subgraphs. More precisely, orbits sums serve as indicator functions for the presence of  classes of subgraphs. 

We identify a simple graph $G$ with a point of $V_n$ by setting
\[
x_{\{i,j\}}(G)=
\begin{cases}
 1 & \text{if }\{i,j\}\text{ is an edge of }G,\\
 0 & \text{otherwise}.
\end{cases}
\]
If $G$, $G'$ are two simple graphs on $n$ vertices, and 
$x^{G'}$ is the monomial corresponding to $G'$, then 
$x^{G'}(G)=1$ exactly when $G$
contains $G'$ as a subgraph (with the given labeling). 
Consequently, evaluating the orbit sum $O_{S_n}(x^{G'})$ on $G$ counts the number of subgraphs of $G$ that are isomorphic to $G'$.

\begin{exmp}
 Consider the complete graph $G$ on three vertices and the graph $G'$ consisting of two edges forming a path through $v_2$ as in Figure \ref{fig:pathGraph}.
 
 The monomial corresponding to $G'$ is
$
 x^{G'}=x_{\{1,2\}}x_{\{2,3\}} .
$
Evaluating this monomial at the vector representing $G$ gives $1$, since both edges appear in $G'$.
 
The orbit sum $O_{S_3}\bigl(x^{G'}\bigr)$ adds the contributions coming from all relabelings of this path. Consequently, evaluating $O_{S_3}\bigl(x^{G'}\bigr)$ on $G$ counts the number of subgraphs of $G$ that are isomorphic to $G'$. In this case, it counts the number of paths contained in the triangle.\hfill$\diamond$
\end{exmp}

\begin{exmp}
Consider $G$, the graph corresponding to $e_{\{1,2\}}+e_{\{2,3\}}$ and $G'$, the graph corresponding to $e_{\{1,3\}}$. 
Under the original labeling, $G'$ is not a subgraph of $G$. 
The corresponding monomial is 
\[
x^{G'}=x_{\{1,3\}},\] and therefore \[x^{G'}\bigl(e_{\{1,2\}}+e_{\{2,3\}}\bigr)=0.
\]
However, 
\[
O_{S_n}\bigl(x^{G'}\bigr)= x_{\{1,3\}}+x_{\{1,2\}}+x_{\{ 2,3\}}.
\]
Evaluating this polynomial on $G$ gives
 \begin{equation*} 
 O_{S_n}\bigl(x^{G'}\bigr)(G) = 0 + 1 + 1 =2.
 \end{equation*}
Thus $G$ contains two subgraphs that are isomorphic to $G'$, corresponding to the two edges of the path. The graphs $G$ and $G'$
are shown in Figure~\ref{combinedFigure}.\hfill$\diamond$
 
 \begin{figure}[ht]
 \centering
 \begin{subfigure}{0.26\textwidth}
 \centering
 \begin{tikzpicture}[line width=1pt]
 \tikzset{vertex/.style = {shape=circle,draw,minimum size=.3em}}
 
 \node[vertex] (a) at (0,0) {$v_1$};
 \node[vertex] (b) at (1,0) {$v_2$};
 \node[vertex] (c) at (.5,.8) {$v_3$};
 
 \draw[] (b)--(c) node[right]{};
 \draw[] (a)--(b) node[right]{};
 \end{tikzpicture}
 \caption{Graph \scalebox{0.95}{$G = e_{\{1,2\}}+e_{\{2,3\}}$}. } 
 \label{G1}
 \end{subfigure}
 \;
 \begin{subfigure}{0.2\textwidth}
 \centering
 \begin{tikzpicture}[line width=1pt]
 \tikzset{vertex/.style = {shape=circle,draw,minimum size=.3em}}
 
 \node[vertex] (a) at (0,0) {$v_1$};
 \node[vertex] (b) at (1,0) {$v_2$};
 \node[vertex] (c) at (.5,.8) {$v_3$}; 
 \draw[] (a)--(c) node[right]{};
 \end{tikzpicture}
 \caption{Graph $G' = e_{\{1,3\}}$.} 
 \label{identifyingSubgraphs}
 \end{subfigure}
 \caption{Note that $G'$ is not a subgraph of $G$ when the labels are preserved, but $G$ has two subgraphs that are isomorphic to $G'$ when the labels are removed.}
 \label{combinedFigure}
 \end{figure}
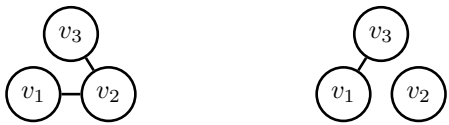
\end{exmp}

These observations also suggest an algorithmic viewpoint. 
A well-known problem in graph theory is the \textit{subgraph isomorphism problem}: given graphs $G$ and $G'$, what's the quickest procedure to determine whether $G$ has a subgraph that is isomorphic to $G'$? 
We have just demonstrated a technique using invariant theory: to determine whether $G'$ appears as a subgraph of $G$, one can construct the orbit sum $O_{S_n}(x^{G'})$ and evaluate this polynomial on $G$. 

For small $n$, this approach looks quite reasonable. 
However, the number of variables and number of terms in the polynomial grows extremely fast with $n$. 
To see this, consider the case where $G'$ is a cycle of length $n$. 
If $G$ is any simple graph on $n$ vertices, then evaluating $O_{S_n}(x^{G'})(G)$ counts the number of Hamiltonian cycles (that is, cycles that visit every vertex exactly once) in $G$.
In particular, it determines whether $G$ contains a Hamiltonian cycle. 
Since this decision problem is well known to be an NP-complete problem, it is not surprising that the polynomial involved is very large: it is a homogeneous polynomial of degree $n$ in $\binom{n}{2}$ variables and contains 
 \[\frac{(n-1)!}{2}\]
terms. 

What does this have to do with the reconstruction conjecture? The key idea is to consider orbit sums of monomials corresponding to multigraphs that contain at least one isolated vertex. 
We denote such a multigraph on $n$ vertices by $M*$, where $M$ is any multigraph on $n-1$ vertices and the star indicates the isolated vertex. 

One can show that an orbit sum $O_{S_n}(x^{M*})$ takes the same value on any two $\kk$-weighted graphs with the same deck (see \cite[Prop.~14.3.2]{ThieryThesis}). 
In fact, the following stronger result holds.
\begin{theorem}[{\cite[Thm.~14.3.7]{ThieryThesis}}]
 Two $\kk$-weighted graphs $G$ and $G'$ have the same deck if and only if 
 \[
 O_{S_n}\bigl(x^{M*}\bigr)(G)=O_{S_n}\bigl(x^{M*}\bigr)\bigl(G'\bigr)
 \] 
 for every multigraph $M*$ on n vertices with at least one isolated vertex.
\end{theorem}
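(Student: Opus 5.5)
We propose to prove the theorem in characteristic zero, the setting of Section~\ref{sec:countSubgraphs}, by relating each orbit sum $O_{S_n}(x^{M*})$ to a sum over the cards. The key identity involves, for each $k\in[n]$, the polynomial ring $\kk[V_{n-1}^{(k)}]$ in the variables $x_{\{i,j\}}$ with $i,j\neq k$, which is a copy of $\kk[V_{n-1}]$. Let $M$ be a multigraph on $n-1$ vertices, and let $r\geq 1$ be the number of isolated vertices of $M*$; this number is constant on the $S_n$-orbit of $x^{M*}$. We write $O^{(k)}(x^M)$ for the $S_{n-1}$-orbit sum of $x^M$, where $S_{n-1}$ permutes $[n]\setminus\{k\}$. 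Then $O^{(k)}(x^M)$ is exactly the sum of those monomials in the orbit of $x^{M*}$ in which vertex $k$ is isolated. Each monomial in the orbit of $x^{M*}$ therefore occurs in exactly $r$ of the polynomials $O^{(k)}(x^M)$, once for each of its isolated vertices. This gives
\[
O_{S_n}\bigl(x^{M*}\bigr)=\frac1r\sum_{k=1}^n O^{(k)}\bigl(x^M\bigr).
\]
Evaluating at $G$, the $k$-th summand only sees the weights of $G_k$. Since $O^{(k)}(x^M)$ is $S_{n-1}$-invariant, its value depends only on the isomorphism class of the card $G_k$. Hence $O_{S_n}(x^{M*})(G)=\frac1r\sum_{C\in\DD(G)} O_{S_{n-1}}(x^M)(C)$, and the forward direction follows immediately.

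For the converse, we reinterpret the hypothesis using the identity above. It says that $\sum_{C\in\DD(G)}f(C)=\sum_{C\in\DD(G')}f(C)$ for every orbit sum $f=O_{S_{n-1}}(x^M)$. By Section~\ref{sec:4}, these orbit sums form a $\kk$-basis of $\kk[V_{n-1}]^{S_{n-1}}$, so the equality holds for every invariant $f$. Because the invariants form an algebra, we may apply this to $f^j$ for every $j\geq 1$. So for every invariant $f$, the two multisets of values $\{\!\{f(C)\mid C\in\DD(G)\}\!\}$ and $\{\!\{f(C)\mid C\in\DD(G')\}\!\}$ have the same power sums $p_1,\dots,p_n$. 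Newton's identities, which need characteristic zero, then show that these two multisets of $n$ field elements coincide.

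It remains to choose a single invariant $f$ that is injective on the finitely many isomorphism classes of cards occurring in $\DD(G)\cup\DD(G')$. This is the step we expect to need the most care. By Section~\ref{sec:InvThy}, invariants separate orbits of a finite group. So for each pair of distinct classes $C\neq D$ among these cards there is an invariant $f_{C,D}$ with $f_{C,D}(C)\neq f_{C,D}(D)$. Consider a linear combination $f=\sum \lambda_{C,D}f_{C,D}$. For a fixed pair $C\neq D$, the condition $f(C)=f(D)$ is a nonzero linear condition on the vector of coefficients $\lambda$. Since $\kk$ is infinite, there is a choice of $\lambda$ avoiding these finitely many proper hyperplanes. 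With such an $f$, equality of the multisets of $f$-values forces equality of the multisets of card classes, that is, $\DD(G)=\DD(G')$.
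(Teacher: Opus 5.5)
The paper does not prove this theorem. It only cites Thi\'ery's thesis (Thm.~14.3.7, and Prop.~14.3.2 for the forward implication), so there is no in-paper argument to compare against. Your proposal is a correct, self-contained proof in characteristic zero.

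The identity $r\,O_{S_n}(x^{M*})=\sum_{k=1}^n O^{(k)}(x^M)$ holds. A monomial in the orbit with $k$ isolated is a relabeling of $M*$ that sends the added isolated vertex to $k$, because two isolated vertices can be swapped by an automorphism. This is a Kelly-lemma type count. It turns the hypothesis into $\sum_{C\in\DD(G)}f(C)=\sum_{C\in\DD(G')}f(C)$ for every $f\in\kk[V_{n-1}]^{S_{n-1}}$. From there, power sums with Newton's identities, followed by a generic linear combination of pairwise separating invariants, correctly recover the deck.

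\textbf{Characteristic zero is needed.} You use this hypothesis three times: to divide by $r$, in Newton's identities, and to make $\kk$ infinite. It is genuinely necessary for the converse, so the statement as printed tacitly assumes it, as in Section~\ref{sec:countSubgraphs}. For example, take characteristic $2$ and $n=3$. The relevant orbit sums are $1$ and $x_{\{1,2\}}^m+x_{\{1,3\}}^m+x_{\{2,3\}}^m$ for $m\geq 1$. These all take the same values on the edgeless graph and on the graph with $w_{\{1,2\}}=w_{\{1,3\}}=1$, $w_{\{2,3\}}=0$. Yet the first deck consists of three edgeless cards, while the second has one edgeless card and two one-edge cards.

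\textbf{Optional simplification.} Your last two steps can be merged. For each card class $D$ occurring in $\DD(G)$ or $\DD(G')$, interpolate a polynomial equal to $1$ on the $S_{n-1}$-orbit of $D$ and $0$ on the orbits of the other occurring classes, then average it over $S_{n-1}$. The resulting invariant $\chi_D$ has $\sum_{C\in\DD(G)}\chi_D(C)$ equal to the multiplicity of $D$ in $\DD(G)$. In characteristic zero this reads off the deck directly, with no need for Newton's identities or the hyperplane-avoidance argument.
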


These results suggest an invariant theory approach to the graph reconstruction conjecture. Pouzet conjectured that the orbit sums $O_{S_n}\bigl(x^{M*}\bigr)$, where $M*$ ranges over all multigraphs with at least one isolated vertex, generate the invariant ring $\kk[V_n]^{S_n}$ \cite{Pouzet77}. This conjecture, were it true, would imply the graph reconstruction conjecture for $\kk$-weighted graphs. 

Unfortunately, this is not the case in general: the orbit sums corresponding to multigraphs with at least one isolated vertex do not generate $\kk[V_n]^{S_n}$ for all $n$ (see \cite{ThieryThesis}). 
Using a computational, though still non-constructive, method, Thi\'ery established this failure for $ 11\leq n\leq 18$. 
Note, however, that this does not provide a counterexample to the graph reconstruction conjecture for $\kk$-weighted graphs. 
To prove reconstruction, it is not necessary for these orbit sums to generate the full invariant ring; it would suffice for them to form a separating set. 
As Example~\ref{eg:SepSet} illustrates, separating sets need not generate the entire ring of invariants. 

\begin{exmp}
 For $n=3$, the situation is especially simple. In this case the action of $S_3$ identifies $\kk[V_3]^{S_3}$ with the ring of symmetric polynomials in three variables. In characteristic zero, the power sums generate this ring via Newton's identities. Since these power sums arise from orbit sums of multigraphs with an isolated vertex, Pouzet's conjecture holds for $n=3$, and hence so does the graph reconstruction conjecture for $\kk$-weighted graphs on three vertices.\hfill$\diamond$
\end{exmp}

As mentioned in Section~\ref{sec:beyondThermos}, Bollob\'as \cite{bollobas--3reconstructibility} proved that almost all graphs can be reconstructed, up to graph isomorphism, from just three of its cards. 
Here ``almost all" means that, as the number of vertices tends to infinity, the probability that a random graph has this property tends to 1. 
There is an analog for $\kk$-weighted graphs, where ``almost all" is understood in the algebraic-geometric sense: outside a proper closed subset, or equivalently, on a dense open subset. 

\begin{theorem}[cf. \cite{ThieryThesis}]
 There is a dense open subset $U\subseteq V_n$ such that every $\kk$-weighted graph in $U$ can be reconstructed, up to isomorphism, from any 
 three of its cards.
\end{theorem}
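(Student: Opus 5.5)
Our plan is to reduce to a statement about \emph{generic} labelled graphs and then show that three cards already pin down the whole labelled weight vector up to relabelling. Assume $\kk$ is infinite (otherwise ``dense open'' is vacuous). Take $n\geq 4$; the small cases are checked by hand.

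First we define $U$. Let $U_0\subseteq V_n$ be the set of weighted graphs whose $\binom n2$ edge weights are pairwise distinct and nonzero. Additionally, for every nontrivial coincidence among sums of weights that we will need below, we require that it fail. This is a finite intersection of complements of hyperplanes and diagonals, hence a dense open set, and it is $S_n$-stable. For $G\in U_0$ every card $G_k$ is a complete weighted graph on $n-1$ vertices with all weights distinct. Such a card has trivial automorphism group, and the unlabelled card still remembers its weight multiset as a \emph{set} of $\binom{n-1}2$ distinct field elements, together with the incidence structure: which weights share a vertex.

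Next, suppose three cards of $G\in U$ are given, coming from the deletion of distinct vertices $a,b,c$. Each card determines the set of edge weights not incident to its deleted vertex. Then
\[
W\setminus(\text{weights at }a)\quad\text{and}\quad W\setminus(\text{weights at }b)
\]
intersect in the weights of edges avoiding both $a$ and $b$. Because weights are distinct, comparing the weight sets of two cards identifies the common sub-card on $n-2$ vertices. Inside card $G_a$, vertex $b$ is recognised as the unique vertex whose incident weights are exactly those missing from the card $G_b$ (intersected with $G_a$); distinctness makes this vertex unique since $n-1\geq 3$. Thus the labelled vertex sets of $G_a$ and $G_b$ can be glued along $V\setminus\{a,b\}$. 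The union contains every edge except $\{a,b\}$, so we obtain $G$ with the single weight $w_{ab}$ unknown.

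The third card $G_c$ recovers $w_{ab}$. Matching as before, $G_c$ contains the edge $\{a,b\}$, and its weight is the unique weight of $G_c$ appearing in neither $G_a$ nor $G_b$. So $G$ is determined up to relabelling.

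The main obstacle is that ``reconstructed from three of its cards'' must rule out a \emph{different} graph $G'$, possibly outside $U$, having three cards isomorphic to these three. Our argument only used the cards themselves, which have distinct weights, not membership of $G'$ in $U$. We must nevertheless check that $G'$ is forced to have pairwise distinct weights and that the identification of shared vertices is forced. Each card of $G'$ has distinct weights, and any two vertices of $G'$ lie together in some card when $n\geq 4$. The one exception is the edge $\{a,b\}$ of $G'$, whose weight is seen in the card $G'_c$. So the gluing argument applies verbatim to $G'$ and yields $G'\cong G$. If a subtle step fails, for instance uniqueness of the vertex matched across cards, we would shrink $U$ further by excluding finitely many polynomial coincidences, which keeps it dense open.
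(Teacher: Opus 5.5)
Your argument follows the paper's proof. You take $U$ to be the graphs with pairwise distinct weights, glue $G_a$ and $G_b$ by matching weights, and read $w_{ab}$ off $G_c$ as the one weight lying in neither of the other two cards. The nonzero condition and the unspecified conditions on sums of weights are never used and can be dropped.

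You are right to go beyond the paper's sketch and ask that any $G'$ with three cards $G'_{a'}\cong G_a$, $G'_{b'}\cong G_b$, $G'_{c'}\cong G_c$ be shown isomorphic to $G$. Your justification does not work, however. That these three cards have distinct weights, and that every pair of vertices lies in one of them, does not make all weights of $G'$ distinct. Two \emph{edges} need not lie in a common card: for example $\{a',c'\}$ and $\{b',x\}$ with $x\notin\{a',c'\}$. Write $S_z$ for the weight set of $G_z$. Your identification of $b'$ inside $G'_{a'}$, as the vertex carrying the weights of $S_a\setminus S_b$, needs exactly $w'_{b'x}\neq w'_{a'c'}$. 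Shrinking $U$ cannot help as stated, since it constrains $G$ and not $G'$. The missing observation is a count. The set $S_a\cup S_b\cup S_c$ has $\binom n2$ elements, as you can compute in $G$. It is also the set of all weights of $G'$, because every edge of $G'$ avoids one of $a',b',c'$. So the $\binom n2$ weights of $G'$ are pairwise distinct, $G'\in U$, and your argument then applies to $G'$ verbatim.

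Separately, the two-card gluing fails for $n=4$, which you treat as part of the general case. There the overlap of $G_a$ and $G_b$ is the single edge $\{c,d\}$, and two cards cannot tell which of its endpoints corresponds to which. Swapping $w_{ac}$ and $w_{ad}$ leaves $G_a$ unchanged and $G_b$ unchanged up to isomorphism. Since the weights are distinct, it nevertheless produces a graph not isomorphic to $G$. So $G_c$ must be used for more than $w_{ab}$. For $n=4$ it suffices to note that the weight of an edge $e$ lies in $S_z$ exactly when $z\notin e$, and these membership patterns distinguish all six edges. The paper's sketch makes the same two-card claim and needs the same caveat. For $n\geq 5$ your gluing is fine: the overlap is a complete graph on at least three vertices with distinct weights, so its vertices are determined by their incident weights.
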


\begin{proof}
 Let $U$ be the set of all $\kk$-weighted graphs whose edge weights are pairwise distinct.
 This is the principal open set defined by
 \[
 \prod_{\{i,j\}\neq \{k,\ell\}}\bigl(x_{\{i,j\}}-x_{\{k,\ell\}}\bigr).
 \] 
 Now let $G\in U$, and choose any three cards from its deck. 
 Without loss of generality, suppose these are the cards $C_1,C_2,C_3$, obtained by deleting vertices 1, 2, and 3, respectively. 
 Because all edge weights are distinct, the vertices can be identified uniquely by comparing the weights of adjacent edges. 
 Using $C_1$ and $C_2$ one can reconstruct all of $G$ except for the weight of the edge $\{1,2\}$. 
 The remaining weight is determined from the third card $C_3$. 
\end{proof}
Note that the dense open set $U$ in this proof contains no simple graphs, since in a simple graph all present edges have the same weight. 
Thus, this result does not recover Bollob\'as' theorem. 

\section{Enough cards \\to determine the truth}
The Graph Reconstruction Conjecture has captured the attention of many mathematicians. 
One reason is the accessibility of the problem. 
Another is the intuitiveness of the question: like many problems across mathematics, it asks whether complete information can be recovered from partial information. 
A third reason is how convincingly true it appears. Numerous classes of graphs, dense families, and even trillions of individual graphs have been shown to satisfy the conjecture, yet a general proof remains elusive.

Recasting the reconstruction conjecture in the language of invariant theory allows us to study the problem through an algebraic lens. Instead of asking directly whether a graph can be reconstructed from its deck, we ask whether certain invariant polynomials determined by the deck suffice to distinguish graphs up to isomorphism.

Although the computations involved can be formidable, this algebraic viewpoint sheds light on which graph properties can be recovered from a deck and highlights the role of symmetry underlying the reconstruction problem. Whether this viewpoint will ultimately lead to a proof remains unknown.

It may be that the invariant-theoretic approach can provide only partial insight. But any proof of a conjecture that has remained open for more than half a century will likely rely on many such partial insights woven together. Each new perspective adds another card to the deck. Eventually, we may have enough cards to determine the truth.

The graph from our opening puzzle is shown in Figure~\ref{fig:theSecretGraph}.

\begin{figure}[ht]
 \centering
 \begin{tikzpicture}[line width=1pt]
 \tikzset{vertex/.style = {shape=circle,draw,minimum size=1em}}
 \tikzset{edge/.style = {->,>={Latex[length=1.5mm]}}};
 \SetVertexNormal[LineWidth=1pt]
 \node[vertex] (a) at (0,1) {};
 \node[vertex] (b) at (-.9,.3 ) {}; 
 \node[vertex] (c) at (-.6, -.8) {}; 
 \node[vertex] (d) at (.6,-.8) {}; 
 \node[vertex] (e) at (.9,.3) {}; 
 \draw[] (a)--(b) node[right]{};
 \draw[] (b)--(c) node[right]{};
 \draw[] (c)--(d) node[right]{};
 \draw[] (d)--(e) node[right]{};
 \draw[] (e)--(a); 
 \draw[] (a)--(c) node[]{};
 \draw[] (b)--(e);
 \end{tikzpicture}
 \caption{The secret graph from the introduction.}
 \label{fig:theSecretGraph} 
\end{figure}
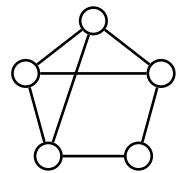 

\section*{Acknowledgments}

The authors thank the organizers and sponsors of the Women in Commutative Algebra III workshop (WiCA III), held in Oaxaca in 2024, where this collaboration began. Continued support was provided by SLMath -- the Simons Laufer Mathematical Sciences Institute, which hosted us for two weeks as part of its Summer Research in Mathematics program in 2025, and by the International Centre for Mathematical Sciences (ICMS) in Edinburgh, which hosted our Research in Groups program for two weeks in January 2026. Gabriela Jeronimo also gratefully acknowledges partial funding from the London Mathematical Society (LMS). Jenny Kenkel would like to thank the Henry Luce Foundation for its generous funding. 



\begin{bibdiv}
	\begin{biblist}
		
		\bib{Bilinskietal2007}{article}{
			author={Bilinski, M.},
			author={Kwon, Y.},
			author={Yu, X.},
			title={On the reconstruction of planar graphs},
			date={2007},
			ISSN={0095-8956,1096-0902},
			journal={J. Combin. Theory Ser. B},
			volume={97},
			number={5},
			pages={745\ndash 756},
			url={https://doi.org/10.1016/j.jctb.2006.12.005},
			review={\MR{2344137}},
		}
		
		\bib{bollobas--3reconstructibility}{article}{
			author={Bollob\'{a}s, B.},
			title={Almost every graph has reconstruction number three},
			date={1990},
			ISSN={0364-9024,1097-0118},
			journal={J. Graph Theory},
			volume={14},
			number={1},
			pages={1\ndash 4},
			url={https://doi.org/10.1002/jgt.3190140102},
			review={\MR{1037416}},
		}
		
		\bib{DerksenKemper}{book}{
			author={Derksen, H.},
			author={Kemper, G.},
			title={Computational invariant theory},
			series={Encyclopaedia of Mathematical Sciences},
			publisher={Springer, Heidelberg},
			date={2015},
			volume={130},
			ISBN={978-3-662-48420-3; 978-3-662-48422-7},
			url={https://doi.org/10.1007/978-3-662-48422-7},
			review={\MR{3445218}},
		}
		
		\bib{Dufresne2009}{article}{
			author={Dufresne, E.},
			title={Separating invariants and finite reflection groups},
			date={2009},
			ISSN={0001-8708},
			journal={Adv. Math.},
			volume={221},
			number={6},
			pages={1979\ndash 1989},
			url={https://doi.org/10.1016/j.aim.2009.03.013},
			review={\MR{2522833}},
		}
		
		\bib{Groenlandetal2022}{article}{
			author={Groenland, C.},
			author={Johnston, T.},
			author={Kupavskii, A.},
			author={Meeks, K.},
			author={Scott, A.},
			author={Tan, J.},
			title={Reconstructing the degree sequence of a sparse graph from a
				partial deck},
			date={2022},
			ISSN={0095-8956,1096-0902},
			journal={J. Combin. Theory Ser. B},
			volume={157},
			pages={283\ndash 293},
			url={https://doi.org/10.1016/j.jctb.2022.07.004},
			review={\MR{4463034}},
		}
		
		\bib{Gupta}{article}{
			author={Gupta, S.},
			author={Mangal, P.},
			author={Paliwal, V.},
			title={Some work towards the proof of the reconstruction conjecture},
			date={2003},
			ISSN={0012-365X,1872-681X},
			journal={Discrete Math.},
			volume={272},
			number={2-3},
			pages={291\ndash 296},
			url={https://doi.org/10.1016/S0012-365X(03)00198-5},
			review={\MR{2009550}},
		}
		
		\bib{Greenwell}{article}{
			author={Greenwell, D.},
			title={Reconstructing graphs},
			date={1971},
			ISSN={0002-9939,1088-6826},
			journal={Proc. Amer. Math. Soc.},
			volume={30},
			pages={431\ndash 433},
			url={https://doi.org/10.2307/2037710},
			review={\MR{286699}},
		}
		
		\bib{Harary63}{incollection}{
			author={Harary, F.},
			title={On the reconstruction of a graph from a collection of subgraphs},
			date={1964},
			booktitle={Theory of {G}raphs and its {A}pplications ({P}roc. {S}ympos.
				{S}molenice, 1963)},
			publisher={Publ. House Czech. Acad. Sci., Prague},
			pages={47\ndash 52},
			review={\MR{175111}},
		}
		
		\bib{HararyPlanthold1985}{article}{
			author={Harary, F.},
			author={Plantholt, M.},
			title={The graph reconstruction number},
			date={1985},
			ISSN={0364-9024,1097-0118},
			journal={J. Graph Theory},
			volume={9},
			number={4},
			pages={451\ndash 454},
			url={https://doi.org/10.1002/jgt.3190090403},
			review={\MR{890233}},
		}
		
		\bib{Kelly1957}{article}{
			author={Kelly, P.},
			title={A congruence theorem for trees},
			date={1957},
			ISSN={0030-8730,1945-5844},
			journal={Pacific J. Math.},
			volume={7},
			pages={961\ndash 968},
			url={http://projecteuclid.org/euclid.pjm/1103043674},
			review={\MR{87949}},
		}
		
		\bib{KostochkaWest2021}{article}{
			author={Kostochka, A.},
			author={West, D.},
			title={On reconstruction of graphs from the multiset of subgraphs
				obtained by deleting {$\ell$} vertices},
			date={2021},
			ISSN={0018-9448,1557-9654},
			journal={IEEE Trans. Inform. Theory},
			volume={67},
			number={6},
			pages={3278\ndash 3286},
			url={https://doi.org/10.1109/TIT.2020.2983678},
			review={\MR{4289319}},
		}
		
		\bib{Muller1976}{article}{
			author={Müller, V.},
			title={Probabilistic reconstruction from subgraphs},
			date={1976},
			ISSN={0010-2628,1213-7243},
			journal={Comment. Math. Univ. Carolinae},
			volume={17},
			number={4},
			pages={709\ndash 719},
			review={\MR{441789}},
		}
		
		\bib{Manvel}{article}{
			author={Manvel, B.},
			title={On reconstructing graphs from their sets of subgraphs},
			date={1976},
			ISSN={0095-8956},
			journal={J. Combinatorial Theory Ser. B},
			volume={21},
			number={2},
			pages={156\ndash 165},
			url={https://doi.org/10.1016/0095-8956(76)90056-3},
			review={\MR{424619}},
		}
		
		\bib{McKay2022}{article}{
			author={McKay, B.},
			title={Reconstruction of small graphs and digraphs},
			date={2022},
			ISSN={1034-4942,2202-3518},
			journal={Australas. J. Combin.},
			volume={83},
			pages={448\ndash 457},
			review={\MR{4446370}},
		}
		
		\bib{Pouzet77}{article}{
			author={Pouzet, M.},
			title={Quelques remarques sur les r\'esultats de {Tutte} concernant le
				probl\`eme de {Ulam}},
			language={fr},
			date={1977},
			journal={Publications du D\'epartement de math\'ematiques (Lyon)},
			volume={14},
			number={2},
			pages={1\ndash 8},
			url={https://www.numdam.org/item/PDML_1977__14_2_1_0/},
			review={\MR{522658}},
		}
		
		\bib{Stockmeyer}{article}{
			author={Stockmeyer, P.},
			title={The falsity of the reconstruction conjecture for tournaments},
			date={1977},
			ISSN={0364-9024,1097-0118},
			journal={J. Graph Theory},
			volume={1},
			number={1},
			pages={19\ndash 25},
			url={https://doi.org/10.1002/jgt.3190010108},
			review={\MR{453584}},
		}
		
		\bib{Thiery}{article}{
			author={Thi{\'e}ry, N.},
			title={Algebraic invariants of graphs; a study based on computer
				exploration},
			date={2000},
			journal={SIGSAM Bull.},
			volume={34},
			pages={9\ndash 20},
			note={\url{https://hal.science/hal-00348388v1}},
		}
		
		\bib{ThieryThesis}{thesis}{
			author={Thi{\'e}ry, N.},
			title={Invariants algébriques de graphes et reconstruction. une étude
				expérimentale.},
			type={PhD thesis},
			address={Lyon, France},
			date={1999},
			note={\textnumero~d'ordre 167-99},
		}
		
		\bib{Ulam1960}{book}{
			author={Ulam, S.},
			title={A collection of mathematical problems},
			series={Interscience Tracts in Pure and Applied Mathematics},
			publisher={Interscience Publishers, New York-London},
			date={1960},
			volume={no. 8},
			review={\MR{120127}},
		}
		
		\bib{Yang_2-connectedGraphs}{article}{
			author={Yang, Y.},
			title={The reconstruction conjecture is true if all {$2$}-connected
				graphs are reconstructible},
			date={1988},
			ISSN={0364-9024,1097-0118},
			journal={J. Graph Theory},
			volume={12},
			number={2},
			pages={237\ndash 243},
			url={https://doi.org/10.1002/jgt.3190120214},
			review={\MR{940833}},
		}
		
	\end{biblist}
\end{bibdiv}
\end{document}